\newtheorem{theorem}{Theorem}
\newtheorem{lemma}[theorem]{Lemma}
\newenvironment{proof}{
\par
\noindent {\bf Proof.}\rm}%
{\mbox{}\hfill\rule{0.5em}{0.809em}\par}
\begin{document}

\baselineskip=19pt
\parindent=0.5cm

\title{\bf Nordhaus-Guddam Type Relations of\\ 
Three Graph Coloring Parameters}

\author{
Kuo-Ching Huang\\
\normalsize  Department of Financial and Computational Mathematics\\
\normalsize  Providence University\\
\normalsize  Taichung 43301, Taiwan\\
\normalsize {\tt Email: kchuang@gm.pu.edu.tw}
\and
Ko-Wei Lih\thanks{Supported in part by the
National Science Council under grant  NSC99-2115-M-001-004-MY3}\\
\normalsize Institute of Mathematics\\
\normalsize Academia Sinica\\
\normalsize Taipei 10617, Taiwan\\
\normalsize {\tt Email: makwlih@sinica.edu.tw}
}

\date{\small }

\maketitle

%
\begin{abstract}
%

\noindent
Let $G$ be a simple graph. A coloring of vertices of $G$ is called 
(i) a $2$-proper coloring if vertices at distance 2 receive distinct 
colors; (ii) an injective coloring if vertices possessing a common 
neighbor receive distinct colors; (iii) a square coloring if vertices 
at distance at most 2 receive distinct colors. In this paper, we study 
inequalities of Nordhaus-Guddam type for the $2$-proper chromatic number, 
the injective chromatic number, and the square chromatic number. 

\bigskip

\noindent
{\em Keywords:}\  Nordhaus-Guddam type, 2-proper coloring, injective 
coloring, square coloring, chromatic number.
\end{abstract}

%
\section{Introduction}
%

Let $G=(V,E)$ be a finite  simple graph with vertex set $V(G)$ and edge 
set $E(G)$. The {\em order} $|G|$ of $G$ is the cardinality of $V(G)$. 
The {\em degree} $d_G(v)$ of a vertex $v\in V(G)$ is the number of edges 
incident to $v$. The {\em maximum} and {\em minimum} degree of $G$ are 
denoted by $\Delta(G)$ and $\delta(G)$, respectively. The {\em neighborhood} 
$N_G(v)$ of a vertex $v\in V(G)$ is the set of vertices adjacent to $v$. 
The {\em distance} $d_G(u,v)$ between two vertices $u$ and $v$ is the 
length of a shortest $(u,v)$-path. We abbreviate $d_G(u,v)$ to $d(u,v)$ 
when no ambiguity arises. A subset $S$ of $V(G)$ is an {\em independent} 
set of $G$ if $uv\not \in E(G)$ for all vertices $u$ and $v$ in $S$. A 
subset $W$ of $V(G)$ is a {\em clique} of $G$ if $uv\in E(G)$ for all 
vertices $u$ and $v$ in $W$. A clique on $n$ vertices is denoted by $K_n$. 
The {\em complement} $\overline{G}$ of $G$ is the graph defined on the 
vertex set $V(G)$ of $G$ such that an edge $uv\in E(\overline{G})$ if and 
only if $uv\not \in E(G)$. 

Let $k$ be a positive integer. A mapping $f:V(G)\rightarrow \{1,2,\ldots ,
k\}$ is called a (proper) $k$-{\em coloring} of $G$ if $f(u)\neq f(v)$ 
whenever $uv\in E(G)$. The {\em chromatic number} $\chi(G)$ of $G$ is the 
minimum number $k$ such that $G$ has a $k$-coloring. The following is a 
well-known theorem of Nordhaus and Guddam \cite{ng}. 

\begin{theorem}
If $G$ is a graph of order $n$, then
\begin{enumerate}
\item
$2\sqrt{n}\leqslant \chi(G)+\chi(\overline{G})\leqslant n+1.$ 
\item
$n\leqslant \chi(G)\chi(\overline{G}) \leqslant (n+1)^2/4.$
\end{enumerate}
\end{theorem}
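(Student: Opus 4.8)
The plan is to prove the four inequalities by exploiting the relationship between proper colorings and independent sets, together with the classical inequality relating the chromatic number to the clique covering / independence structure of a graph. The central tool is the observation that an independent set in $G$ is a clique in $\overline{G}$, and that $\chi(G)$ is bounded below by $n/\alpha(G)$, where $\alpha(G)$ is the independence number, since each color class is an independent set of size at most $\alpha(G)$.

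For the lower bounds, I would first establish $\chi(G)\chi(\overline{G})\geqslant n$. The idea is to use $\chi(G)\geqslant n/\alpha(G)$ and the fact that $\alpha(G)=\omega(\overline{G})\leqslant \chi(\overline{G})$, where $\omega$ denotes the clique number. Combining these gives $\chi(G)\chi(\overline{G})\geqslant (n/\alpha(G))\cdot\alpha(G)=n$, which is the product lower bound. The additive lower bound $2\sqrt{n}\leqslant \chi(G)+\chi(\overline{G})$ then follows immediately from the AM--GM inequality: since $\chi(G)+\chi(\overline{G})\geqslant 2\sqrt{\chi(G)\chi(\overline{G})}\geqslant 2\sqrt{n}$.

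For the upper bounds, I would proceed in the opposite direction, bounding the \emph{sum} first and then the product. To bound $\chi(G)+\chi(\overline{G})$, I would use induction on $n$, or alternatively the greedy-coloring bound $\chi(G)\leqslant \Delta(G)+1$ together with the complementary relation $\Delta(\overline{G})=n-1-\delta(G)$; summing a suitably chosen pair of such bounds should yield $\chi(G)+\chi(\overline{G})\leqslant n+1$. A cleaner route is induction: removing a vertex $v$ changes each of $\chi(G)$ and $\chi(\overline{G})$ by at most $1$, and a careful case analysis on whether both colorings genuinely require an extra color for $v$ gives the bound. Once the sum bound $\chi(G)+\chi(\overline{G})\leqslant n+1$ is in hand, the product upper bound is purely arithmetic: subject to a fixed sum $s=\chi(G)+\chi(\overline{G})\leqslant n+1$, the product $\chi(G)\chi(\overline{G})$ is maximized when the two factors are as equal as possible, so $\chi(G)\chi(\overline{G})\leqslant (s/2)^2\leqslant ((n+1)/2)^2=(n+1)^2/4$.

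The main obstacle will be the upper bound $\chi(G)+\chi(\overline{G})\leqslant n+1$, since the lower bounds and the passage between additive and multiplicative forms are essentially immediate consequences of AM--GM. The inductive step requires showing that $v$ cannot force an extra color in \emph{both} $G$ and $\overline{G}$ simultaneously; the key point is that $v$ has degree $d_G(v)$ in $G$ and degree $n-1-d_G(v)$ in $\overline{G}$, so if $d_G(v)$ is large then $v$ is easy to color in $\overline{G}$ and vice versa, preventing both chromatic numbers from increasing. Handling the arithmetic of this case analysis carefully—ensuring the two potential increases cannot both occur when the sum is already at the threshold—is where the real work lies.
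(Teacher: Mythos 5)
The paper never proves this statement: it is quoted as the classical 1956 theorem of Nordhaus and Gaddum, with only a citation to the original paper, so there is no in-paper argument to compare against. Judged on its own merits, your proposal is a correct outline of the standard proof. The lower bounds are handled exactly as in the classical argument: each color class of $G$ is an independent set, so $\chi(G)\geqslant n/\alpha(G)$, and $\alpha(G)=\omega(\overline{G})\leqslant\chi(\overline{G})$, giving $\chi(G)\chi(\overline{G})\geqslant n$; AM--GM then yields $\chi(G)+\chi(\overline{G})\geqslant 2\sqrt{n}$. For the upper bound on the sum, your inductive plan is the right one, and the step you flag as ``the real work'' closes as follows: if re-inserting $v$ increases both chromatic numbers, then necessarily $d_G(v)\geqslant\chi(G-v)$ and $d_{\overline{G}}(v)\geqslant\chi(\overline{G}-v)$, since otherwise a greedy extension of an optimal coloring of the vertex-deleted graph would need no new color; hence $\chi(G-v)+\chi(\overline{G}-v)\leqslant d_G(v)+d_{\overline{G}}(v)=n-1$, and adding $2$ keeps the total at most $n+1$, while if at most one increases the induction hypothesis suffices directly. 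Note the correct formulation is not that both increases are impossible, but that they can occur only when the deleted-graph sum is at most $n-1$; your closing sentence about the increases not both occurring ``when the sum is already at the threshold'' captures exactly this. One caveat: your alternative route via $\chi(G)\leqslant\Delta(G)+1$ and $\chi(\overline{G})\leqslant\Delta(\overline{G})+1=n-\delta(G)$ does not work as stated, since these bounds sum to $n+1+(\Delta(G)-\delta(G))$, which exceeds $n+1$ whenever $G$ is irregular; only the inductive argument (or a Brooks-type refinement) is viable. The final passage from the sum bound to the product bound $(n+1)^2/4$ via AM--GM is correct.
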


\bigskip

Inequalities involving the sum or product of a parameter applied to a graph 
and its complement are commonly known as Nordhaus-Guddam type relations. 
The reader is referred to Aouchiche and Hansen \cite{ah} for a recent survey.

A mapping $f:V(G)\rightarrow \{1,2,\ldots , k\}$ is called
\begin{itemize}
\item
a {\em $2$-proper $k$-coloring} of $G$ if $f(u)\neq f(v)$ 
whenever $d(u,v)=2$; 
\item
an {\em injective $k$-coloring} of $G$ if $f(u)\neq f(v)$ 
whenever the $u$ and 
$v$ have a common neighbor;
\item
a {\em square $k$-coloring} of $G$ if $f(u)\neq f(v)$ 
whenever $d(u,v)\leqslant 2$.
\end{itemize}

The minimum number $k$ such that $G$ has a $2$-proper, an injective, or a 
square $k$-coloring is called the {\em $2$-proper, injective, {\em or} 
square chromatic number of $G$}.  They are denoted by $\chi_2(G)$, $\chi_i(G)$, 
and $\chi_{_{\square}}(G)$, respectively. Let $G^2$ be the square graph of $G$ 
obtained by adding a new edge between any pair of vertices that are distance 2 
apart in $G$. Obviously, $\chi_{_{\square}}(G)$ is precisely $\chi(G^2)$.  

The above graph colorings are closely related to a more general notion of graph 
labelings. Let $p$ and $q$ be two nonnegative integers. A $k$-$L(p,q)$-labeling 
of a graph $G$ is a mapping $f: V(G)\rightarrow \{0, 1, \ldots , k\}$ such that 
$|f(u)-f(v)|$ is at least $p$ if $d(u,v)=1$ and at least $q$ if $d(u,v)=2$. The 
$L(p,q)$-labeling number $\lambda(G;p,q)$ of $G$ is the least $k$ such that $G$ 
has a $k$-$L(p,q)$-labeling with $\max\{f(v) \mid v \in V(G)\} = k$. Obviously, 
an $L(1,0)$-labeling of a graph $G$ is a proper coloring of $G$ and $\chi(G)= 
\lambda(G;1,0)+1$; an $L(0,1)$-labeling of a graph $G$ is a 2-proper coloring of 
$G$ and $\chi_2(G)= \lambda(G;0,1)+1$; an $L(1,1)$-labeling is a square coloring 
and $\chi_{_{\square}}(G)= \lambda(G;1,1)+1$. Note that, if $G$ is triangle-free, 
then $\chi_i(G)=\chi_2(G)$. The reader is referred to Yeh \cite{yeh} for a survey 
on $L(p,q)$-labelings of graphs. The injective coloring has been studied in
\cite{bi,bcrw,cky10,cky11,lst}.

In this paper, we study inequalities of Nordhaus-Guddam type for the $2$-proper 
chromatic number, the injective chromatic number, and the square chromatic number. 
Graphs attaining extrema are also obtained.

%
\section{2-proper chromatic numbers}
%

For a given coloring of a graph, a {\em color class} consists of all vertices 
of a fixed color. Note that any color class of a $2$-proper coloring consists 
of disjoint cliques. For $n_1\geqslant n_2\geqslant \cdots \geqslant n_r 
\geqslant 1$, let $K_{n_1,n_2, \ldots , n_r}$ denote the {\em complete 
$r$-partite graph} such that its vertex set has $r$ disjoint parts with edges 
joining every pair of vertices belonging to different parts.

\begin{lemma}\label{2}
For $r\geqslant 2$,  $\chi_2(K_{n_1,n_2, \ldots , n_r})=n_1$.
\end{lemma}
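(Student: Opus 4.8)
The plan is to reduce the computation of $\chi_2$ to an ordinary chromatic number of a much simpler graph, by first pinning down exactly which pairs of vertices lie at distance $2$. Write $V_1, \ldots, V_r$ for the $r$ parts, with $|V_i| = n_i$. Two vertices in different parts are joined by an edge and hence are at distance $1$. On the other hand, if $u$ and $v$ are distinct vertices of a common part $V_i$, they are non-adjacent, and since $r \geqslant 2$ there is some other part $V_j$, which is nonempty because $n_j \geqslant 1$, whose vertices are adjacent to both $u$ and $v$; any such vertex $w$ gives a path $u$-$w$-$v$ of length $2$. Thus $d(u,v) = 2$ precisely when $u$ and $v$ are two distinct vertices of the same part.

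Consequently, a mapping $f$ is a $2$-proper coloring of $K_{n_1, \ldots, n_r}$ if and only if $f$ assigns distinct colors to any two vertices inside a common part, while imposing no constraint on vertices lying in different parts. In other words, the $2$-proper colorings of $K_{n_1, \ldots, n_r}$ coincide with the proper colorings of the graph $H$ that is the disjoint union of the cliques on $V_1, \ldots, V_r$, namely $H = K_{n_1} \cup K_{n_2} \cup \cdots \cup K_{n_r}$. Therefore $\chi_2(K_{n_1, \ldots, n_r}) = \chi(H)$.

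It then remains to compute $\chi(H)$. Since $H$ is a disjoint union of cliques, its chromatic number equals the order of its largest component, which is $\max\{n_1, \ldots, n_r\} = n_1$ by the ordering assumption. Concretely, for the lower bound one notes that the $n_1$ pairwise-distance-$2$ vertices of $V_1$ force $n_1$ distinct colors, so $\chi_2 \geqslant n_1$; for the matching upper bound one colors the vertices of each $V_i$ with the colors $1, 2, \ldots, n_i \leqslant n_1$, which is legitimate precisely because distinct parts impose no distance-$2$ restriction, so $\chi_2 \leqslant n_1$. Combining the two bounds gives $\chi_2(K_{n_1, \ldots, n_r}) = n_1$.

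The argument is essentially bookkeeping once the distance-$2$ characterization is secured, so I do not anticipate a serious obstacle. The one point demanding care is verifying that same-part vertices are at distance exactly $2$ rather than farther apart: this is exactly where the hypothesis $r \geqslant 2$ enters, since with a single part the vertices of that part would be mutually unreachable, and the statement would fail. I would therefore state the distance computation explicitly, making clear where each hypothesis, $r \geqslant 2$ and $n_j \geqslant 1$, is used.
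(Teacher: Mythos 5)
Your proof is correct and follows essentially the same route as the paper: the lower bound comes from the $n_1$ pairwise-distance-$2$ vertices of $V_1$, and the upper bound from coloring each part $V_i$ injectively with colors $1,\ldots,n_i$. Your reformulation via the disjoint union of cliques $K_{n_1}\cup\cdots\cup K_{n_r}$ is just a cleaner packaging of the same argument, with the useful extra care of verifying explicitly that same-part vertices are at distance exactly $2$ (where $r\geqslant 2$ is needed), a point the paper leaves implicit.
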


\begin{proof}
Let $\{V_1,V_2,\ldots , V_r\}$ denote the parts of $G=K_{n_1,n_2, \ldots , 
n_r}$ with $|V_i|=n_i$, $1\leqslant i\leqslant r$. For each $i$, color the 
vertices of $V_i$ with colors $1, 2, \ldots , n_i$ such that no pair of 
vertices receiving the same color to obtain a $2$-proper coloring. Hence 
$\chi_2(G)\leqslant n_1$. Since any two vertices in $V_1$ are at distance 2, 
$\chi_2(G)\geqslant n_1$. 
\end{proof}

\begin{theorem}
For any graph $G$ of order $n$,
\[
 1\leqslant (\chi_2(G)\chi_2(\overline{G}))^{1/2} \leqslant 
 \frac{\chi_2(G)+\chi_2(\overline{G})}{2} \leqslant  \frac{n+1}{2}.
\]
\end{theorem}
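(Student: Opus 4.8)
The plan is to treat the three inequalities separately, observing that the first two are essentially formal and that the entire content lies in the rightmost bound. The leftmost inequality holds because $\chi_2(G)\geqslant 1$ and $\chi_2(\overline{G})\geqslant 1$ for any graph on $n\geqslant 1$ vertices, so their product is at least $1$ and so is its square root. The middle inequality is precisely the arithmetic--geometric mean inequality applied to the two nonnegative numbers $\chi_2(G)$ and $\chi_2(\overline{G})$. Hence everything reduces to the rightmost inequality, which after clearing the factor $1/2$ reads $\chi_2(G)+\chi_2(\overline{G})\leqslant n+1$.

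To attack this, I would first reformulate the $2$-proper chromatic number as an ordinary chromatic number. Define the distance-two graph $H$ on $V(G)$ by joining $u$ and $v$ exactly when $d_G(u,v)=2$; then a $2$-proper coloring of $G$ is the same thing as a proper coloring of $H$, so $\chi_2(G)=\chi(H)$. Defining $H'$ analogously from $\overline{G}$ gives $\chi_2(\overline{G})=\chi(H')$. The target becomes $\chi(H)+\chi(H')\leqslant n+1$, which has the shape of a Nordhaus--Gaddum bound, but for two graphs that are not obviously complementary.

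The crucial observation---and the step I expect to carry the real weight---is that $H$ and $H'$ are edge-disjoint. Indeed, every edge $uv$ of $H$ satisfies $d_G(u,v)=2$ and therefore $uv\notin E(G)$, whereas every edge $uv$ of $H'$ satisfies $d_{\overline{G}}(u,v)=2$ and therefore $uv\in E(G)$; no pair can satisfy both conditions. Consequently every edge of $H'$ is a non-edge of $H$, so $H'$ is a spanning subgraph of $\overline{H}$, whence $\chi(H')\leqslant\chi(\overline{H})$.

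It then suffices to invoke the classical Nordhaus--Gaddum inequality of Theorem 1, applied to the graph $H$ of order $n$, which gives $\chi(H)+\chi(\overline{H})\leqslant n+1$. Chaining the estimates yields
\[
\chi_2(G)+\chi_2(\overline{G})=\chi(H)+\chi(H')\leqslant\chi(H)+\chi(\overline{H})\leqslant n+1,
\]
as required. The only genuine subtlety is the edge-disjointness dichotomy (an edge of $H$ forces a non-edge of $G$, an edge of $H'$ forces an edge of $G$); once that is secured, the passage to the ordinary chromatic number and the appeal to Theorem 1 are routine.
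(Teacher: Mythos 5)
Your proof is correct, and it takes a genuinely different---and considerably shorter---route than the paper's. You reduce the $2$-proper chromatic number to an ordinary chromatic number via the distance-two graph $H$ (edges exactly the pairs at distance $2$ in $G$), observe that $H$ and the analogous graph $H'$ built from $\overline{G}$ are edge-disjoint (an edge of $H$ forces a non-edge of $G$, while an edge of $H'$ forces an edge of $G$), conclude that $H'$ is a spanning subgraph of $\overline{H}$ so that $\chi(H')\leqslant\chi(\overline{H})$, and finish by applying Theorem 1 to $H$. Every step checks out, and it is worth noting that the argument hinges precisely on the definition of $2$-proper coloring requiring distance \emph{exactly} two: for square colorings the corresponding graphs $G^2$ and $\overline{G}^2$ need not be edge-disjoint, which is consistent with the upper bound in that setting being $2n$ rather than $n+1$. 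The paper instead gives a self-contained, constructive argument: among the $2$-proper colorings of $G$ with $\chi_2(G)$ colors it selects one maximizing the number of singleton classes, splits the color classes into singletons, single cliques, and unions of at least two cliques, bounds the multi-clique classes via Lemma \ref{2} on complete multipartite graphs, and then explicitly assembles a $2$-proper coloring of $\overline{G}$ with at most $n-\chi_2(G)+1$ colors by threading a sequence $u_1,v_1,\ldots,u_b,v_b$ through the single-clique classes. What your approach buys is brevity and the conceptual point that this Nordhaus--Guddam relation for $\chi_2$ is a formal corollary of the classical one for $\chi$; what the paper's approach buys is independence from Theorem 1 and an explicit coloring of $\overline{G}$, structural information of the sort that feeds into the extremal constructions $H_{\rm od}$ and $H_{\rm ev}$ given after the theorem.
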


\begin{proof}
It suffices to prove that $\chi_2(G)+\chi_2(\overline{G})\leqslant n+1.$
Without loss of generality, we may suppose that $\chi_2(G)\geqslant 
\chi_2(\overline{G}).$ If $\chi_2(G) \leqslant (n+1)/2$, then $\chi_2(G)+
\chi_2(\overline{G})\leqslant n+1$. Now assume that $\chi_2(G)> (n+1)/2$. 

Among all $2$-proper colorings of $G$ using $\chi_2(G)$ colors, let $f$ be 
chosen with the maximum number of singleton color classes. Let $\{X_i\}_{i=1}^a$,
$\{Y_j\}_{j=1}^b$, and $\{Z_k\}_{k=1}^c$ denote, respectively, the collections 
of color classes of $f$ such that each $X_i$ is a singleton, each $Y_j$ consists 
of a single clique of size at least two, and each $Z_k$ consists of at least two 
disjoint cliques. Thus $\chi_2(G)=a+b+c> (n+1)/2$. First note that $a > 0$, for 
otherwise $n\geqslant 2b+2c= 2\chi_2(G) > n + 1$.

Let $\mathcal{X}=\bigcup_{i=1}^a X_i$, $\mathcal{Y}=\bigcup_{j=1}^b Y_j$, and
$\mathcal{Z}=\bigcup_{k=1}^c Z_k$. Then $\mathcal{X}$ must be an independent 
set, for otherwise we may re-color two adjacent vertices in $\mathcal{X}$ with 
the same color to obtain a $2$-proper coloring of $G$ using $\chi_2(G)-1$ colors. 
The complement $\overline{G[Z_k]}$ of the subgraph $G[Z_k]$ induced by $Z_k$ in 
$G$ is a complete multipartite graph. By Lemma \ref{2}, $\chi_2(\overline{G[Z_k]})
\leqslant |Z_k|-1$.

Now suppose that $b>0$. There is a vertex $u_1$ of $Y_1$ that is non-adjacent 
to any vertex in $\mathcal{X}$. Otherwise, we could re-color each vertex $y$ of 
$Y_1$ with color $f(x_{i_y})$, where $i_y=\min\{t \mid x_t\in N_G(y)\cap 
\mathcal{X}\}$, to obtain a $2$-proper coloring of $G$ with $\chi_2(G)-1$ colors. 
Next, we move any vertex $v \in Y_1$ that is different from $u_1$ and adjacent 
to all vertices of $Y_2$ from $Y_1$ to $Y_2$. In view of the maximality of $a$, 
we are left with at least one $v_1 \in Y_1$ that is different from $u_1$ and 
non-adjacent to a certain vertex $u_2\in Y_2$ if $b>1$. We may repeat this 
process of moving vertices to the next color class until we obtain a sequence 
of vertices $u_1,v_1,u_2,v_2, \ldots , u_b,v_b$ such that $u_j, v_j\in Y_j$, 
and $u_j \ne v_j$ for $1\leqslant j\leqslant b$ and $v_ju_{j+1}\in E(\overline{G})$ 
for $1\leqslant j\leqslant b-1$. Now, in $\overline{G}$, we color $u_1$ and the 
vertices in $\mathcal{X}$ with color 1, $v_j$ and $u_{j+1}$ with color $j+1$ for 
$1\leqslant j\leqslant b-1$, and the vertices in $\mathcal{Y} \setminus \{u_1,v_1, 
\ldots , v_{b-1},u_b\}$ with colors $b+1,b+2,\ldots , |\mathcal{Y}| -b+1$ such that 
no pair of vertices receiving the same color. It follows that
\[
\begin{array}{rcl}
 \chi_2(\overline{G})& \leqslant & \sum_{k=1}^c \chi_2(\overline{G[Z_k]})+ 
 												|\mathcal{Y}| -b+1\\
                     & \leqslant & |\mathcal{Z}|-c+ |\mathcal{Y}|-b+1\\
                     & =         & n-a-b-c+1\\
                     & =         & n-\chi_2(G)+1.\\
\end{array}
\]
The above inequalities hold even if $b=0$. Therefore, $\chi_2(G)+\chi_2(\overline{G})
\leqslant n+1$.       
\end{proof}

\bigskip

Let us consider the sharpness of inequalities in the above theorem. The lower 
bound is sharp since $\chi_2(K_n)=\chi_2(\overline{K_n})=1$. For the case of 
upper bound, we first construct an auxiliary graph $H_k$ as follows. Let $k 
\geqslant 6$. The vertex set of $H_k$ can be partitioned into an independent 
set $X=\{ x_0, x_1, \ldots , x_{k-1} \}$ and a clique $Y=\{ y_0, y_1, \ldots , 
y_{k-1} \}$ so that each $x_i$ is joined to $y_i, y_{i+1}, \ldots , y_{i+\lfloor 
k/2 \rfloor}$ except $y_{i+\lfloor k/2 \rfloor - 1}$. Here indices are taken 
modulo $k$.

For $0 \leqslant i \ne j < k$, if $y_{i+\lfloor k/2 \rfloor - 1}$ and $y_{j+
\lfloor k/2 \rfloor - 1}$ are not neighbors of both $x_i$ and $x_j$, then 
$x_i$ and $x_j$ together have $2 \lfloor k/2 \rfloor > k-2$ edges joining $Y$. 
Hence, they must have a common neighbor and $d_{H_k}(x_i,x_j)=2$. Suppose that 
$x_i$ is adjacent to $y_{j+\lfloor k/2 \rfloor - 1}$. Since $k \geqslant 6$, 
there are three possibilities: (i) $y_{i+\lfloor k/2 \rfloor}= y_{j+\lfloor k/2 
\rfloor -1}$; (ii) $y_{i+\lfloor k/2 \rfloor-2}=y_{j+\lfloor k/2 \rfloor -1}$; 
(iii) $y_{i+t}=y_{j+\lfloor k/2 \rfloor -1}$ for some $0 \leqslant t \leqslant 
\lfloor k/2 \rfloor-3$. Then $x_i$ and $x_j$ have a common neighbor $z$, where
$z$ is $y_j$ for (i), $y_{j+1}$ for (ii), and $y_{j+\lfloor k/2 \rfloor}$ for 
(iii). Again, $d_{H_k}(x_i,x_j)=2$.

The complement graph $\overline{H_k}$ can be isomorphically described as
follows. Let $X=\{ x_0, x_1, \ldots , x_{k-1} \}$ be a clique and $Y=\{ y_0, 
y_1, \ldots , y_{k-1} \}$ be an independent set such that each $y_i$ is joined 
to $x_i, x_{i+1}, \ldots , x_{i+\lceil k/2 \rceil}$ except $x_{i+\lceil k/2 
\rceil-1}$. When $k$ is even, $\overline{H_k}$ is isomorphic to $H_k$. When $k$ 
is odd, any $y_i$ and $y_j$, $i \ne j$, together have $2 \lceil k/2 \rceil = k+1$ 
edges joining $X$. It follows that $d_{\overline{H_k}}(y_i,y_j)=2$ for $0 \leqslant 
i \ne j < k$. 

In the second step, we construct a graph $H_{\rm od}$ of order $2k+1 \geqslant 
13$ and a graph $H_{\rm ev}$ of order $2k+2 \geqslant 14$ as follows. We join 
a new vertex $\infty$ to all $y_i$'s in $H_k$ to obtain $H_{\rm od}$ and two 
new independent vertices $\infty_1$ and $\infty_2$ to all $y_i$'s in $H_k$ to 
obtain $H_{\rm ev}$. It is straightforward to see that $\chi_2(H_{\rm od}) +
\chi_2(\overline{H_{\rm od}})=2k+2$ and $\chi_2(H_{\rm ev})+
\chi_2(\overline{H_{\rm ev}})=2k+3$.

%
\section{Injective chromatic numbers}
%

For the injective chromatic number $\chi_i(G)$ of a graph $G$, it is clear 
that $\Delta(G)\leqslant \chi_i(G)\leqslant |G|$. Note that if $S$ is a 
color class of an injective $k$-coloring, then $\Delta(G[S])\leqslant 1$.

Suppose $G$ is a graph of order $n\leqslant 4$. It is routine to check that 
(i)\ $n\leqslant \chi_i(G)+\chi_i(\overline{G}) \leqslant 2n$ except $\chi_i(C_4)
+\chi_i(\overline{C_4})=3$; (ii)\ $n\leqslant \chi_i(G) \chi_i(\overline{G})
\leqslant n^2$ except $G \in \{ K_2$, $\overline{K_2}$, $P_3$, $\overline{P_3}$, 
$C_4$, $\overline{C_4} \}$. Here, $P_n$ and $C_n$ denote a path and a cycle on 
$n$ vertices, respectively.

\begin{lemma}\label{4}
Suppose that the graph $G$ has order $n \geqslant 5$. Then the following 
statements hold.

{\rm (1)}\ 
If $\delta(G)\geqslant (n+1)/2$, then $\chi_i(G)=n.$

{\rm (2)}\
If $\delta(G)= \lfloor (n-1)/2 \rfloor$, then $\chi_i(G)
\geqslant \delta(G)+1$.
\end{lemma}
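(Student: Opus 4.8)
The plan is to treat the two parts separately, since part (1) asks for an exact value and part (2) only a lower bound, and the injective chromatic number is governed by common neighborhoods rather than adjacencies.

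For part (1), I would show that under the hypothesis $\delta(G)\geqslant (n+1)/2$ every pair of distinct vertices $u,v$ has a common neighbor, which forces all $n$ vertices to receive distinct colors and hence $\chi_i(G)=n$ (the upper bound $\chi_i(G)\leqslant n$ is free, as noted just before the lemma). To produce a common neighbor, I would use a counting/inclusion argument: if $u$ and $v$ have no common neighbor, then $N_G(u)$ and $N_G(v)$ are disjoint subsets of $V(G)$, and if additionally $uv\notin E(G)$ then neither $u$ nor $v$ lies in these neighborhoods, giving $|N_G(u)|+|N_G(v)|\leqslant n-2$; if $uv\in E(G)$ then $u\in N_G(v)$ and $v\in N_G(u)$, so the disjoint neighborhoods still fit in $V(G)$ and $|N_G(u)|+|N_G(v)|\leqslant n$. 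In either case the minimum-degree bound $\delta(G)\geqslant (n+1)/2$ makes the left side at least $n+1$, a contradiction; I would just need to check the non-adjacent case is the binding one and that the strict inequality $(n+1)/2$ (as opposed to $n/2$) is exactly what is needed to push past the threshold.

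For part (2), writing $d=\delta(G)=\lfloor(n-1)/2\rfloor$, I would pick a vertex $w$ of minimum degree and look at its closed neighborhood. The key observation is that any two distinct neighbors of $w$ share the common neighbor $w$, so all $d$ vertices of $N_G(w)$ must receive pairwise distinct injective colors; this immediately gives $\chi_i(G)\geqslant d$. To gain the extra $+1$ and reach $\delta(G)+1$, I would produce one more vertex that is forced to differ in color from every vertex of $N_G(w)$. The natural candidate is $w$ itself or a vertex at distance two from $w$: I would argue that because $\delta(G)=\lfloor(n-1)/2\rfloor$ the neighborhoods are large enough that $w$ shares a common neighbor with each of its own neighbors (or that some vertex outside $N_G(w)$ has a common neighbor with all of $N_G(w)$), forcing a $(d+1)$st color.

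The hard part will be pinning down the extra $+1$ in part (2) cleanly. Unlike part (1), here we are exactly at the threshold $\lfloor(n-1)/2\rfloor$, so neighborhoods need not fully overlap and one cannot simply assert that every pair has a common neighbor; the argument must instead locate a specific additional vertex forced into a new color class. I expect this to require a small case analysis on the parity of $n$ and a careful counting estimate on $|N_G(w)|$ together with the neighborhoods of the vertices of $N_G(w)$, exploiting that $2\delta(G)$ is close to $n$. The adjacency/non-adjacency bookkeeping (whether the witnessing vertex lies in $N_G(w)$ or is $w$ itself) is where I anticipate the most delicacy, and I would verify it against the small excluded cases $n\leqslant 4$ and the exceptional $C_4$ flagged in the text to make sure the bound $n\geqslant 5$ is genuinely needed.
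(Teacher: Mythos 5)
Your part (1) is correct and is essentially the paper's own argument: if two vertices $u,v$ had no common neighbor, then $N_G(u)$ and $N_G(v)$ would be disjoint subsets of $V(G)$, so $d_G(u)+d_G(v)\leqslant n$, contradicting $d_G(u)+d_G(v)\geqslant n+1$; hence all $n$ vertices pairwise conflict and $\chi_i(G)=n$.

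Part (2), however, has a genuine gap, and it is not merely a matter of ``pinning down the $+1$'': the strategy itself cannot work. Every variant you propose amounts to exhibiting $\delta(G)+1$ vertices that pairwise have common neighbors (either $N_G(w)\cup\{w\}$, or $N_G(w)$ together with some outside vertex), i.e., a clique of size $\delta(G)+1$ in the ``conflict graph'' whose edges are the pairs of vertices sharing a common neighbor. Such a clique need not exist. Take $G=C_5$: here $n=5\geqslant 5$ and $\delta(G)=2=\lfloor(n-1)/2\rfloor$, so the lemma applies. In $C_5$ two vertices have a common neighbor exactly when they are at distance $2$, and the resulting conflict graph is again a $5$-cycle, which is triangle-free. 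So no three vertices of $C_5$ pairwise conflict; in particular $w$ has no common neighbor with either of its own neighbors (adjacent vertices of $C_5$ lie in no triangle), and no vertex outside $N_G(w)$ conflicts with both vertices of $N_G(w)$. Yet $\chi_i(C_5)=3=\delta(G)+1$, because the conflict graph is an odd cycle: its chromatic number is $3$ even though its clique number is $2$. The bound in part (2) is thus a genuinely chromatic phenomenon, not a clique phenomenon, and no amount of case analysis will produce the $\delta+1$ mutually conflicting vertices you are looking for. (Note also that the troublesome instance is at $n=5$, inside the allowed range, not among the excluded cases $n\leqslant 4$ that you planned to check.)

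The paper's proof takes a different route. First, it handles non-regular graphs in one line: $\chi_i(G)\geqslant\Delta(G)$, since the neighbors of a maximum-degree vertex pairwise conflict; so if $\Delta(G)>\delta(G)$ we are done (you used a minimum-degree vertex, which only yields the weaker bound $\chi_i(G)\geqslant\delta(G)$). In the remaining case $G$ is $k$-regular with $k=\lfloor(n-1)/2\rfloor$, and the paper argues by contradiction from a hypothetical injective $k$-coloring with classes $V_1,\dots,V_k$: if every $|V_i|\leqslant 2$ then $n\leqslant 2k\leqslant n-1$, absurd; otherwise some class contains three vertices $v_1,v_2,v_3$, whose neighborhoods are pairwise disjoint and which span at most one edge among themselves (two edges would make one of them a common neighbor of the other two), so $n-3\geqslant\left|\bigcup_{i=1}^{3}N_G(v_i)\setminus\{v_1,v_2,v_3\}\right|\geqslant 3k-2>n-3$ for $n\geqslant 5$, again absurd. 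To repair your write-up, replace the ``find $\delta+1$ mutually conflicting vertices'' step with this counting argument on the color classes.
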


\begin{proof}
(1) Since $\delta(G)\geqslant (n+1)/2$, $d_G(u) + d_G(v)\geqslant n+1$ 
for any two vertices $u$ and $v$ in $G$. Then $u$ and $v$ have a common 
neighbor. Hence, $\chi_i(G)=n.$

(2) If $\Delta(G) > \delta(G)$, then $\chi_i(G)\geqslant \Delta(G) \geqslant 
\delta(G)+1$. Consider $\Delta(G) = \delta(G)= \lfloor (n-1)/2 \rfloor = k$. 
Suppose $\chi_i(G)=k$ and let $\{V_1, V_2, \dots , V_k\}$ be the set of color 
classes of an injective $k$-coloring of $G$. If $|V_i|\leqslant 2$ for all $i$, 
then $n=\sum_{i=1}^{k}|V_i| \leqslant 2k\leqslant n-1$, a contradiction. Assume 
that, for some $i$, $V_i$ contains at least three vertices $v_1, v_2, v_3$. 
Since no two vertices in $V_i$ have a common neighbor, $\Delta(G[V_i])\leqslant 
1$ and hence $n-3 \geqslant |\bigcup_{i=1}^3N_G(v_i) \setminus \bigcup_{i=1}^3 
\{v_i\}| \geqslant 2(k-1)+k>n-3$ when $n \geqslant 5$, again a contradiction.
\end{proof}

\begin{lemma}\label{5}
Suppose $G$ is a $k$-regular graph of order $n\geqslant 5$.  

{\rm (1)}\
If $k > n/2$ or $k < (n-2)/2$, then $n+1\leqslant \chi_i(G)+ \chi_i(\overline{G})
\leqslant 2n$.

{\rm (2)}\
If $k = n/2$ or $(n-2)/2$, then $n\leqslant \chi_i(G)+ \chi_i(\overline{G})
\leqslant 2n$.
\end{lemma}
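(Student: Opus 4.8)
The plan is to reduce both parts to Lemma~\ref{4}, exploiting two features of a $k$-regular graph $G$ of order $n$: its minimum and maximum degree coincide, $\delta(G)=\Delta(G)=k$, and its complement is $(n-1-k)$-regular, so $\delta(\overline{G})=\Delta(\overline{G})=n-1-k$. The upper bound $\chi_i(G)+\chi_i(\overline{G})\leqslant 2n$ needs no case analysis: the coloring that gives every vertex its own color is injective, whence $\chi_i(H)\leqslant |H|=n$ for each of $H=G$ and $H=\overline{G}$. All the content therefore lies in the lower bounds, and I would dispatch the four stated ranges of $k$ one at a time.

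For part (1), suppose first that $k>n/2$. A quick parity check (the least integer exceeding $n/2$ is $(n+1)/2$ when $n$ is odd and $(n+2)/2$ when $n$ is even) shows $\delta(G)=k\geqslant (n+1)/2$, so Lemma~\ref{4}(1) gives $\chi_i(G)=n$; since $\chi_i(\overline{G})\geqslant 1$, the sum is at least $n+1$. The case $k<(n-2)/2$ is the complementary one: it rearranges to $n-1-k>n/2$, and hence $\delta(\overline{G})\geqslant (n+1)/2$ as before, so Lemma~\ref{4}(1) applied to $\overline{G}$ forces $\chi_i(\overline{G})=n$ and again the sum is at least $n+1$.

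For part (2), note that $k=n/2$ and $k=(n-2)/2$ can occur only when $n$ is even, and in each subcase I would extract $n/2$ from each of the two graphs. If $k=n/2$, then $\chi_i(G)\geqslant\Delta(G)=n/2$ outright, while $\overline{G}$ is $((n-2)/2)$-regular with $(n-2)/2=\lfloor (n-1)/2\rfloor$, so Lemma~\ref{4}(2) gives $\chi_i(\overline{G})\geqslant (n-2)/2+1=n/2$; summing yields $n$. If $k=(n-2)/2$ the two roles swap: here $\delta(G)=\lfloor (n-1)/2\rfloor$ triggers Lemma~\ref{4}(2) to give $\chi_i(G)\geqslant n/2$, and $\overline{G}$ is $(n/2)$-regular, so $\chi_i(\overline{G})\geqslant\Delta(\overline{G})=n/2$.

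The step needing the most care is lining up each value of $k$ with the correct clause of Lemma~\ref{4}, since that lemma separates the regime $\delta\geqslant (n+1)/2$, which pins the invariant at $n$, from the single boundary degree $\delta=\lfloor (n-1)/2\rfloor$, which yields only $\delta+1$; the floor and ceiling arithmetic must be verified so that in each case exactly one of $G,\overline{G}$ sits in the applicable regime. It is also worth observing that the four conditions omit the odd-order middle value $k=(n-1)/2$, where $\delta(G)=\delta(\overline{G})=\lfloor (n-1)/2\rfloor$ and Lemma~\ref{4}(2) applied to both graphs in fact gives the stronger bound $\chi_i(G)+\chi_i(\overline{G})\geqslant n+1$, consistent with part (1).
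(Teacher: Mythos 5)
Your proof is correct and follows essentially the same route as the paper's: both reduce each range of $k$ to Lemma~\ref{4}, using part (1) of that lemma (applied to $G$ or $\overline{G}$ after checking that $k>n/2$ or $n-1-k>n/2$ forces minimum degree at least $(n+1)/2$) for the strict inequalities, and part (2) together with the trivial bound $\chi_i \geqslant \Delta$ for the boundary cases $k=n/2$ and $k=(n-2)/2$. Your closing remark about the omitted middle value $k=(n-1)/2$ for odd $n$ is also consistent with how the paper handles that case in the theorem that follows.
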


\begin{proof}
The upper bounds are obvious. Note that, since $G$ is $k$-regular, $\overline{G}$ 
is $k'$-regular, where $k' = n-k-1$.  

(1) If $k > n/2$, by (1) of Lemma \ref{4}, $\chi_i(G)=n.$ Then $\chi_i(G)+
\chi_i(\overline{G})= n+ \chi_i(\overline{G})\geqslant n+1$. If $k < (n-2)/2$, 
then $k' > n/2$.  By (1) of Lemma \ref{4}, $\chi_i(\overline{G})=n$ and then
$\chi_i(G)+\chi_i(\overline{G})= \chi_i(G)+ n\geqslant n+1$.

(2) If $k = n/2$, then $k' = (n-2)/2$. By (2) of Lemma \ref{4}, $\chi_i(\overline{G})
\geqslant k' + 1$ and then $\chi_i(G)+ \chi_i(\overline{G}) \geqslant k+k'+1= n$. 
If $k = (n-2)/2$, by (2) of Lemma \ref{4}, $\chi_i(G)\geqslant k + 1$ and then 
$\chi_i(G) +\chi_i(\overline{G})\geqslant k+1+k' = n$.  
\end{proof}

\begin{theorem}
Suppose $G$ is a graph of order $n\geqslant 5$.

{\rm (1)}\  
If $n=5$ or $n$ is even, then  $n\leqslant \chi_i(G)+ \chi_i(\overline{G})
\leqslant 2n.$

{\rm (2)}\
If $n\geqslant 7$ is odd, then  $n+1\leqslant \chi_i(G)+ \chi_i(\overline{G})
\leqslant 2n.$
\end{theorem}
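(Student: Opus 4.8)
The upper bound is immediate: since $\chi_i(H)\leqslant |H|$ for every graph $H$, we have $\chi_i(G)+\chi_i(\overline{G})\leqslant 2n$. All the work lies in the lower bounds, and I would organize it by the ``degree gap'' $\Delta(G)-\delta(G)$, combining the trivial bound $\chi_i(H)\geqslant\Delta(H)$ with the identity $\Delta(\overline{G})=n-1-\delta(G)$. This gives
\[
\chi_i(G)+\chi_i(\overline{G})\geqslant \Delta(G)+\Delta(\overline{G})=n-1+\bigl(\Delta(G)-\delta(G)\bigr),
\]
so whenever $\Delta(G)-\delta(G)\geqslant 2$ the sum is already $\geqslant n+1$, settling both parts in that range, and whenever the gap is $\geqslant 1$ the sum is $\geqslant n$, which is all that part~(1) needs in the non-regular situation.

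Next I would dispose of regular graphs. If $G$ is $k$-regular, Lemma~\ref{5} gives $\geqslant n$ in every subcase (enough for part~(1)) and $\geqslant n+1$ via Lemma~\ref{5}(1) whenever $k\neq (n-1)/2$. The single value $k=(n-1)/2$ escapes Lemma~\ref{5} only when $n$ is odd (this is also the $n=5,\,k=2$ instance relevant to part~(1)); there both $G$ and $\overline{G}$ are $(n-1)/2$-regular with minimum degree $\lfloor(n-1)/2\rfloor$, so Lemma~\ref{4}(2) yields $\chi_i(G),\chi_i(\overline{G})\geqslant (n+1)/2$ and hence a sum $\geqslant n+1$. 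Together with the first paragraph this completes part~(1) entirely and the regular portion of part~(2).

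The crux is part~(2) in the nearly-regular case $\Delta(G)-\delta(G)=1$ with $n\geqslant 7$ odd. First I would invoke Lemma~\ref{4}(1): if $\delta(G)\geqslant (n+1)/2$ then $\chi_i(G)=n$ and we are done, and symmetrically if $\Delta(G)\leqslant (n-3)/2$, i.e.\ $\delta(\overline{G})\geqslant (n+1)/2$. What remains forces $\delta(G)\in\{(n-3)/2,(n-1)/2\}$, and by the $G\leftrightarrow\overline{G}$ symmetry of the statement I may assume the balanced configuration $\delta(G)=(n-1)/2$, $\Delta(G)=(n+1)/2$. Now Lemma~\ref{4}(2) gives $\chi_i(G)\geqslant(n+1)/2$, whereas only $\chi_i(\overline{G})\geqslant\Delta(\overline{G})=(n-1)/2$ is available, so the degree bounds alone give merely $n$. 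To gain the extra $+1$ I would show by contradiction that $\chi_i(\overline{G})=(n-1)/2$ is impossible: such a coloring uses $(n-1)/2$ classes on $n$ vertices, so some class contains three vertices $w_1,w_2,w_3$; since a color class induces maximum degree $\leqslant 1$ and no two of its vertices share a neighbor, the open neighborhoods $N_{\overline{G}}(w_1),N_{\overline{G}}(w_2),N_{\overline{G}}(w_3)$ are pairwise disjoint, and counting them against the $n-3$ vertices outside $\{w_1,w_2,w_3\}$ (using $\delta(\overline{G})=(n-3)/2$) forces a numerical impossibility.

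The main obstacle is that this neighborhood count is strict only for $n\geqslant 9$: in the subcase where two of the $w_i$ are adjacent the inequality degenerates to an equality exactly at $n=7$, so the smallest odd order is not excluded by counting. There the equality pins the configuration down almost completely (all three $w_i$ have degree $(n-3)/2$ and their neighborhoods tile the remaining vertices), and I would finish by a direct analysis of this essentially unique configuration. The point is that in the resulting $G$ almost every pair of vertices acquires a common neighbor, so an injective coloring of $G$ can merge only a bounded number of color classes; one checks that $\chi_i(\overline{G})$ may indeed equal $(n-1)/2$ while the forced structure of $G$ then makes $\chi_i(G)\geqslant(n+3)/2$, keeping the sum at least $n+1$. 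Establishing this compensating lower bound on $\chi_i(G)$ for the residual $n=7$ configuration is the delicate step I expect to require the most care.
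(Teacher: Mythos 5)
Your proposal is correct and follows essentially the same route as the paper's proof: the degree-gap trichotomy via $\chi_i \geqslant \Delta$, Lemma~\ref{5} (with Lemma~\ref{4}(2)) for regular graphs, and in the gap-one case the reduction to the balanced configuration $\Delta(G)=(n+1)/2$, $\Delta(\overline{G})=(n-1)/2$, followed by the disjoint-neighborhood count on a three-element color class of $\overline{G}$ and the equality analysis at $n=7$. The one step you defer --- verifying that the forced $n=7$ configuration makes the five distinguished vertices pairwise share common neighbors in $G$, hence $\chi_i(G)\geqslant 5>(n+1)/2$ and the sum stays at least $n+1$ --- is exactly the finite check the paper carries out, so your outline matches it in substance.
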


\begin{proof}
The upper bounds are obvious. It is clear that $\chi_i(G)+ \chi_i(\overline{G})
\geqslant\Delta(G)+\Delta(\overline{G})=\Delta(G)-\delta(G)+n-1\geqslant n+1$ 
if $\Delta(G)-\delta(G) \geqslant 2$.

{\em Case 1.}\  $\Delta(G)=\delta(G)=k$.

Then $G$ is $k$-regular and $\overline{G}$ is $k'$-regular, where $k' = n-k-1$. 
If $n$ is even, then Lemma \ref{5} implies $\chi_i(G)+\chi_i(\overline{G})
\geqslant n$. Moreover, suppose that $n$ is odd. If $k\neq (n-1)/2$, $\chi_i(G)
+\chi_i(\overline{G}) \geqslant n+1$ by (1) of Lemma \ref{5}. If $k = (n-1)/2$, 
then $k' = (n-1)/2$. By (2) of Lemma \ref{4}, $\chi_i(G)+ \chi_i(\overline{G})
\geqslant k+1+k'+1= n+1$.

{\em Case 2.}\  $\Delta(G)-\delta(G)= 1$.

Then $\chi_i(G)+\chi_i(\overline{G})\geqslant \Delta(G)+\Delta(\overline{G}) 
\geqslant n$ and (1) is established. Next, let $n\geqslant 7$ be an odd integer. 
Suppose on the contrary that $\chi_i(G)+\chi_i(\overline{G})= n$. Then $\chi_i(G)
=\Delta(G)$, $\chi_i(\overline{G})=\Delta(\overline{G})$ and $\Delta(G)+ 
\Delta(\overline{G})=n$. Without loss of generality, we may assume $\Delta(G)
\geqslant \Delta(\overline{G})$. Hence, $\Delta(G) \geqslant n/2$ which implies 
$\Delta(G) \geqslant (n+1)/2$. If $\Delta(G)>(n+1)/2$, then $\delta(G)=\Delta(G)-1
\geqslant (n+1)/2$. By (1) of Lemma \ref{4}, $\chi_i(G)=n$ and then $\chi_i(G)+
\chi_i(\overline{G})\geqslant n+1$. Suppose $\chi_i(G)=\Delta(G)=(n+1)/2$. Then
$\chi_i(\overline{G})=\Delta(\overline{G})=(n-1)/2$ and $\delta(\overline{G})=
(n-3)/2$. Let $p=(n-1)/2$ and $\{V_1, V_2, \dots , V_p\}$ be the set of color 
classes of an injective $p$-coloring of $\overline{G}$. Since $p=(n-1)/2$, $V_i$ 
contains at least three vertices $v_1, v_2, v_3$ for some $i$. Since no two 
vertices in $V_i$ have a common neighbor, $\Delta(G[V_i])\leqslant 1$ and hence 
$n-3 \geqslant |\bigcup_{i=1}^3N_G(v_i) \setminus \bigcup_{i=1}^3 \{v_i\}| 
\geqslant 2(\delta(\overline{G})-1)+\delta(\overline{G})=(n-3)+(n-7)/2$. It 
follows that $n=7$ and $N_{\overline{G}}(v_1)=\{v_2,v_4\}$, $N_{\overline{G}}(v_2)
=\{v_1,v_5\}$ and $N_{\overline{G}}(v_3)=\{v_6,v_7\}$. Then, in $G$, we have 
$v_1v_5\in E(G)$, $v_2v_4\in E(G)$, and $N_G(v_3)=\{v_1,v_2,v_4,v_5\}$. Therefore, 
any pair $v_i$ and $v_j$, $1\leqslant i < j\leqslant 5$, have a common neighbor. 
Then $5 \leqslant \chi_i(G)=(n+1)/2=4$, a contradiction. Therefore, $\chi_i(G)
+\chi_i(\overline{G})\geqslant n+1$.
\end{proof}

\begin{theorem}
For any graph $G$ of order $n\geqslant 5$, $n\leqslant \chi_i(G)\chi_i(\overline{G})
\leqslant n^2.$
\end{theorem}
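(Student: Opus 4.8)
The plan is to dispatch the upper bound immediately and then reduce the lower bound to an elementary inequality fed by the sum estimate already proved. For the upper bound I would simply invoke $\Delta(G)\leqslant \chi_i(G)\leqslant n$, and likewise for $\overline{G}$, so that $\chi_i(G)\chi_i(\overline{G})\leqslant n\cdot n=n^2$. Everything interesting lies in showing $\chi_i(G)\chi_i(\overline{G})\geqslant n$. Write $a=\chi_i(G)$ and $b=\chi_i(\overline{G})$; the preceding theorem on the sum $\chi_i(G)+\chi_i(\overline{G})$ guarantees $a+b\geqslant n$ for every $n\geqslant 5$ (indeed $a+b\geqslant n+1$ when $n\geqslant 7$ is odd, but $a+b\geqslant n$ is all I will use).

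The main idea is that for positive integers the sum bound almost yields the product bound, the only gap occurring at a single corner. Explicitly, if $a\geqslant 2$ and $b\geqslant 2$, then $(a-2)(b-2)\geqslant 0$ rearranges to $ab\geqslant 2(a+b)-4\geqslant 2n-4$, and $2n-4\geqslant n$ since $n\geqslant 5$; so the product bound holds with room to spare. Thus the only configurations that could violate $ab\geqslant n$ are those with $\min\{a,b\}=1$, where the sum bound alone gives merely $ab\geqslant n-1$.

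The crux is therefore to sharpen the case $\min\{a,b\}=1$, and this is where I expect the real work to lie. By symmetry assume $a=\chi_i(G)=1$. I would first record the characterisation that $\chi_i(G)=1$ holds exactly when $\Delta(G)\leqslant 1$: a single color is injective precisely when no vertex has two neighbours. Granting $\Delta(G)\leqslant 1$, the complement satisfies $\delta(\overline{G})=n-1-\Delta(G)\geqslant n-2$, and $n-2\geqslant (n+1)/2$ precisely because $n\geqslant 5$. Lemma \ref{4}(1) then forces $\chi_i(\overline{G})=n$, so in fact $b=n$ and $ab=n$. Hence the offending corner $(1,n-1)$ simply cannot occur: $a=1$ drives $b$ all the way up to $n$.

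Combining the two regimes gives $ab\geqslant n$ in every case, completing the lower bound. The one step demanding genuine care is the interface with Lemma \ref{4}(1): one must verify that $\Delta(G)\leqslant 1$ pushes $\delta(\overline{G})$ above the threshold $(n+1)/2$, and this is exactly where the hypothesis $n\geqslant 5$ is needed and is tight, since for $n=4$ the inequality $n-2\geqslant (n+1)/2$ fails, matching the exceptional pair $C_4,\overline{C_4}$ recorded before the lemma. Once that corner is eliminated, no lengthy computation remains.
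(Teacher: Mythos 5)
Your proof is correct, and it takes a genuinely different route from the paper's. You derive the product bound from the sum theorem proved immediately before: writing $a=\chi_i(G)$, $b=\chi_i(\overline{G})$, when $a,b\geqslant 2$ the inequality $(a-2)(b-2)\geqslant 0$ turns $a+b\geqslant n$ into $ab\geqslant 2(a+b)-4\geqslant 2n-4\geqslant n$, and the remaining corner $a=1$ is eliminated by the (correct) observation that $\chi_i(G)=1$ forces $\Delta(G)\leqslant 1$, hence $\delta(\overline{G})=n-1-\Delta(G)\geqslant n-2\geqslant (n+1)/2$ for $n\geqslant 5$, so Lemma \ref{4}(1) applied to $\overline{G}$ gives $b=n$ and $ab=n$ exactly. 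All steps check out, and there is no circularity, since the sum theorem's proof uses only Lemmas \ref{4} and \ref{5} and not the product bound. The paper instead gives a self-contained argument independent of the sum theorem: take optimal injective colorings $f$ of $G$ and $g$ of $\overline{G}$ and form the product map $h(u)=(f(u),g(u))$; if $h$ assigns distinct pairs to distinct vertices, it is an injective $pq$-coloring of $K_n$, so $pq\geqslant \chi_i(K_n)=n$; otherwise $h(u)=h(v)$ for some pair, say $uv\in E(G)$, and then $N_G(u)$ and $N_G(v)$ are disjoint and cover $V(G)$, which yields $\chi_i(G)\chi_i(\overline{G})\geqslant d_G(u)\,d_{\overline{G}}(v)=a(a-1)\geqslant \lceil n/2\rceil(\lceil n/2\rceil -1)\geqslant n$. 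Your argument is shorter and more modular, but it leans on the full strength of the harder sum theorem; the paper's product-coloring device is the standard technique for Nordhaus--Gaddum product lower bounds, stands on its own, and in passing reveals the structure of graphs where the product map degenerates (two adjacent vertices whose neighborhoods partition $V(G)$).
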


\begin{proof}
The upper bound is obvious. Let $G$ be a graph of order $n\geqslant 5$. Suppose 
$\chi_i(G)=p$ and $\chi_i(\overline{G})=q$. Let $f$ and $g$ be injective $p$-coloring 
and $q$-coloring of $G$ and $\overline{G}$, respectively. Define a mapping $h: V(K_n)
\rightarrow \{1, 2, \ldots , p\}\times \{1, 2, \ldots , q\}$ by $h(u)=(f(u),g(u))$ 
for all $u\in V(K_n)$. If $h(u)\neq h(v)$ for all vertices $u$ and $v$, then $h$ is 
an injective $pq$-coloring of $K_n$. Hence, $n=\chi_i(K_n)\leqslant pq =\chi_i(G)
\chi_i(\overline{G})$. Suppose $h(u)= h(v)$ for some vertices $u$ and $v$. Without 
loss of generality, we may assume $uv\in E(G)$. Since $f(u)= f(v)$, $N_G(u) \cap
N_G(v)=\emptyset$. Since $g(u)= g(v)$, $x\in N_G(u)\cup N_G(v)$ for all vertices $x$ 
in $G$. Then $N_G(u)\cup N_G(v)=V(G)$ and any vertex $x$ in $G$ is adjacent to exact 
one of $u$ and $v$. Suppose $d_G(u)=a\geqslant d_G(v)=n-a$. Then $\chi_i(G)
\chi_i(\overline{G}) \geqslant d_G(u)d_{\overline{G}}(v)=a(a-1)\geqslant {\left \lceil 
n/2 \right \rceil}({\left \lceil n/2\right \rceil}-1)\geqslant n$.  
\end{proof}

\bigskip 

Consider the sharpness of the lower bounds. For $n=5$, $\chi_i(P_5)+
\chi_i(\overline{P_5})=5$. For $n=2k\geqslant 6$, $\chi_i(K_{k,k})+
\chi_i(\overline{K_{k,k}})=k+k=n$. For $n=2k+1\geqslant 7$, $\chi_i(K_{k+1,k})+
\chi_i(\overline{K_{k+1,k}})=k+1+k+1=n+1$. For $n\neq 2$, $\chi_i(K_n)
\chi_i(\overline{K_n})=n$.

Now consider the sharpness of the upper bounds. Note that $\chi_i(G)=|G|$ 
if and only if any two distinct vertices in $G$ have a common neighbor. Using 
this fact, we may see that $\chi_i(G)+\chi_i(\overline{G})<2|G|$ if $1<|G|<9$.
For $k\geqslant 3$, let $n=3k+t\geqslant 9$, where $t=0,1$ or $2$. We construct 
an auxiliary graph $G_{3k}$ as follows. The vertex set of $G_{3k}$ can be 
partitioned into three cliques $X=\{ x_1, x_2, \ldots , x_k \},~Y=\{ y_1, y_2, 
\ldots , y_k \}$, and $Z=\{ z_1, z_2, \ldots , z_k \}$ such that $\{x_i, y_i, z_i \}$ 
forms a clique for all $1\leqslant i\leqslant k$. We join a new vertex $\infty$ 
to all $x_i$'s in $G_{3k}$ to obtain $G_{3k+1}$ and two new vertices $\infty_1$ 
and $\infty_2$ to all $x_i$'s in $G_{3k}$ to obtain $G_{3k+2}$. It can be verified 
that any two distinct vertices in $G_n$ and $\overline{G_n}$ have a common neighbor. 
Hence, $\chi_i(G_n)+\chi_i(\overline{G_n})=n+n=2n$ and $\chi_i(G_n)
\chi_i(\overline{G_n})=n^2$.

%
\section{Square chromatic numbers}
%

Since any pair of vertices that are adjacent or distance 2 apart receive 
distinct colors in a square coloring, every color class of a square coloring 
must be an independent set. 

\begin{theorem}
For any graph $G$ of order $n$, $n+1\leqslant \chi_{_{\square}}(G)+ 
\chi_{_{\square}}(\overline{G}) \leqslant 2n$, or equivalently, $n+1
\leqslant \chi(G^2)+\chi(\overline{G}^2)\leqslant 2n$.
\end{theorem}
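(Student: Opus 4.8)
The plan is to treat the two inequalities separately, since the upper bound is essentially free and all the content lives in the lower bound. For the upper bound, any graph on $n$ vertices admits a proper coloring of its square in which every vertex receives a distinct color, so $\chi_{_{\square}}(G)=\chi(G^2)\leqslant n$ and likewise $\chi_{_{\square}}(\overline{G})\leqslant n$; adding these gives $\chi_{_{\square}}(G)+\chi_{_{\square}}(\overline{G})\leqslant 2n$.

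For the lower bound I would isolate one clean observation: for any vertex $v$, the closed neighborhood $\{v\}\cup N_G(v)$ induces a clique in $G^2$. Indeed, $v$ is at distance $1$ from each of its neighbors, and any two distinct neighbors $u,w$ of $v$ satisfy $d(u,w)\leqslant 2$ because $v$ is a common neighbor. Choosing $v$ with $d_G(v)=\Delta(G)$ produces a clique of size $\Delta(G)+1$ in $G^2$, whence $\chi_{_{\square}}(G)=\chi(G^2)\geqslant \omega(G^2)\geqslant \Delta(G)+1$. This is the only real ingredient.

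Applying this bound to $G$ and to $\overline{G}$ simultaneously yields $\chi_{_{\square}}(G)+\chi_{_{\square}}(\overline{G})\geqslant \Delta(G)+\Delta(\overline{G})+2$. The finishing move is the standard identity that the minimum-degree vertex of $G$ is the maximum-degree vertex of $\overline{G}$, i.e. $\Delta(\overline{G})=n-1-\delta(G)$. Substituting gives $\Delta(G)+\Delta(\overline{G})+2=n+1+(\Delta(G)-\delta(G))\geqslant n+1$, since $\Delta(G)\geqslant\delta(G)$. This matches the extremal behavior seen for $G=K_n$ (where $\Delta(G)=\delta(G)$ forces equality), consistent with $\chi_{_{\square}}(K_n)+\chi_{_{\square}}(\overline{K_n})=n+1$.

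The main thing to get right is recognizing that the crude clique/degree estimate already pins down the sum exactly, rather than chasing a stronger-looking but weaker tool. The tempting alternative is the product-coloring argument (as used for $\chi_i$): every pair of vertices is adjacent in $G^2$ or in $\overline{G}^2$, since a pair cannot be a non-edge of both — being a non-edge of $G^2$ needs $uv\notin E(G)$, while being a non-edge of $\overline{G}^2$ needs $uv\in E(G)$ — so the pair $(f,g)$ of the two colorings is injective and $\chi_{_{\square}}(G)\chi_{_{\square}}(\overline{G})\geqslant n$. However, this multiplicative bound is far too weak for an additive conclusion (it tolerates both parameters being of order $\sqrt{n}$), so I expect the genuine obstacle to be resisting that route; once the $\Delta(G)+1$ lower bound and the identity $\Delta(\overline{G})=n-1-\delta(G)$ are in place, no further calculation is needed.
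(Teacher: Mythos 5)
Your proof is correct, but it takes a genuinely different route from the paper. You rely on the single observation that the closed neighborhood $N_G[v]$ of a maximum-degree vertex is a clique in $G^2$, giving $\chi_{_{\square}}(G)\geqslant \Delta(G)+1$, then apply this to both $G$ and $\overline{G}$ and finish with the identity $\Delta(\overline{G})=n-1-\delta(G)$, obtaining $\chi_{_{\square}}(G)+\chi_{_{\square}}(\overline{G})\geqslant n+1+\bigl(\Delta(G)-\delta(G)\bigr)\geqslant n+1$. The paper instead analyzes an optimal square coloring of $G$: writing it with $a$ singleton classes and $b$ classes of size at least two, it shows that any vertex has at most one $G$-neighbor in any other non-singleton class, so the union of the non-singleton classes has pairwise distance at most $2$ in $\overline{G}$ and hence forms a clique in $\overline{G}^2$, giving $\chi_{_{\square}}(\overline{G})\geqslant n-a\geqslant n-\chi_{_{\square}}(G)+1$. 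Your argument is shorter and more elementary, and as a bonus it yields the refinement $n+1+\Delta(G)-\delta(G)$, which strictly improves on $n+1$ for irregular graphs; the paper's coloring-based argument yields a complementary kind of refinement, namely that $\chi_{_{\square}}(\overline{G})$ is at least $n$ minus the number of singleton color classes of any optimal square coloring of $G$. Both correctly pin down the extremal case: for regular graphs (e.g.\ $K_n$) your bound collapses to exactly $n+1$.
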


\begin{proof}
The upper bound is obvious. Suppose $G$ is a graph of order $n$ and 
$\chi(G^2)=p$. Let $f=(X_1,\dots, X_a,Y_1,\dots,Y_b)$ be a square 
$p$-coloring of $G$ with $a+b=p$, $|X_i|=1$ and $|Y_j| \geqslant 2$ 
for all $i$ and $j$. If $a=p$, then $a=n$ and $\chi(G^2)+\chi(\overline{G}^2)
= n+\chi(\overline{G}^2)\geqslant n+1$. Suppose $a<p$. Since $f$ is a square 
coloring, each $Y_j$ is an independent set of $G$ and any vertex $u$ in $Y_i$ 
has at most one neighbor in $Y_j$ for all $i\neq j$. Hence, $uv\in 
E(\overline{G})$ for some $v$ in $Y_j$. Then $d_{\overline{G}}(u,v)
\leqslant 2$ for all vertices $u$ and $v$ in $\bigcup_{j=1}^{b}Y_j$. 
Therefore, $\chi(\overline{G}^2)\geqslant n-a\geqslant n-p+1=n-
\chi(G^2)+1$, or $\chi(G^2)+\chi(\overline{G}^2)\geqslant n+1$.
\end{proof}

\begin{theorem}
For any graph $G$ of order $n$, $n\leqslant \chi_{_{\square}}(G) 
\chi_{_{\square}}(\overline{G}) \leqslant n^2$, or equivalently, 
$n\leqslant \chi(G^2)\chi(\overline{G}^2) \leqslant n^2$.
\end{theorem}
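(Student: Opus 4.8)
The plan is to reuse the product-coloring device that established the lower bound for the injective chromatic number, now adapted to square colorings. The upper bound $\chi_{_{\square}}(G)\chi_{_{\square}}(\overline{G})\leqslant n^2$ is immediate, since any square coloring of an $n$-vertex graph uses at most $n$ colors, so each factor is at most $n$. The entire content is therefore the lower bound $\chi(G^2)\chi(\overline{G}^2)\geqslant n$.

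For the lower bound, I would set $p=\chi_{_{\square}}(G)$ and $q=\chi_{_{\square}}(\overline{G})$, fix a square $p$-coloring $f$ of $G$ and a square $q$-coloring $g$ of $\overline{G}$, and define $h: V(G)\rightarrow \{1,\ldots ,p\}\times\{1,\ldots ,q\}$ by $h(u)=(f(u),g(u))$. The goal is then to show that $h$ is injective, which immediately yields $n=|V(G)|\leqslant pq$ and finishes the proof.

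The key step is verifying that $h$ separates every pair of distinct vertices $u,v$. Here I would invoke the elementary dichotomy that either $uv\in E(G)$ or $uv\in E(\overline{G})$. Since a square coloring is in particular a proper coloring (vertices at distance $1$ receive distinct colors), in the first case $f(u)\neq f(v)$ and in the second case $g(u)\neq g(v)$; either way $h(u)\neq h(v)$, so $h$ is injective.

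I expect the main point to be recognizing that, in contrast with the injective product theorem proved earlier, no ``collision'' case analysis is needed at all. There the combined map could fail to be injective and one had to study two vertices sharing the same pair of colors; here the relevant constraint is merely adjacency (distance $1$), which is automatically captured by the complementary pair $G,\overline{G}$. Thus the argument collapses to the single observation above, and the only thing to state carefully is that square colorings refine ordinary proper colorings.
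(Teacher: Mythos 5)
Your proposal is correct, but it takes a different route from the paper. The paper disposes of the lower bound in one line: since a square coloring is in particular a proper coloring, $\chi(G)\leqslant \chi(G^2)$ and $\chi(\overline{G})\leqslant \chi(\overline{G}^2)$, and then the classical Nordhaus--Guddam inequality $n\leqslant \chi(G)\chi(\overline{G})$ (Theorem~1 of the paper) immediately gives $n\leqslant \chi(G^2)\chi(\overline{G}^2)$. You instead re-run the product-coloring device directly: pair a square coloring $f$ of $G$ with a square coloring $g$ of $\overline{G}$, and use the dichotomy that distinct vertices are adjacent in $G$ or in $\overline{G}$ to conclude that $h=(f,g)$ is injective. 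Your argument is sound --- indeed it is essentially the classical proof of the Nordhaus--Guddam product lower bound, transplanted to the square setting --- and it has the merit of being self-contained and of making explicit the contrast with the injective-coloring theorem, where the pair map can fail to be injective and a collision analysis is unavoidable. What the paper's route buys is brevity and conceptual economy: the single monotonicity observation $\chi(G)\leqslant \chi(G^2)$ (which is exactly your remark that square colorings refine proper colorings) reduces everything to a known theorem, with no new coloring construction needed. Both proofs ultimately rest on the same fact, namely that every pair of distinct vertices is separated either by $G$ or by $\overline{G}$; you inline that fact, the paper delegates it to Theorem~1.
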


\begin{proof}
The upper bound is obvious. Since $\chi(G) \leqslant \chi(G^2)$ and 
$\chi(\overline{G}) \leqslant \chi(\overline{G}^2)$, the lower bound 
is a consequence of the Nordhaus-Guddam theorem.
\end{proof}

\bigskip

Consider the sharpness of the lower bounds. It is clear that $\chi(K_n^2)=n$ 
and $\chi(\overline{K_n}^2)=1$. Hence, $\chi(K_n^2)+\chi(\overline{K_n}^2)=n+1$  
and $\chi(K_n^2)\chi(\overline{K_n}^2)=n$.
 
Now consider the sharpness of the upper bounds. For $2\leqslant n \leqslant 4$, 
it is routine to check that $\chi(G^2)+\chi(\overline{G}^2) \leqslant 2n-1$ if 
$|G|=n$. For $n\geqslant 5$, we construct a graph $F_n$ of order $n$ as follows. 
The vertex set of $F_n$ can be partitioned into a 5-cycle  $C_5=x_1x_2x_3x_4x_5x_1$ 
and an independent set $Y=\{ y_1, y_2, \ldots , y_{n-5} \}$ such that each $y_i$ is 
adjacent to both $x_1$ and $x_3$. It can be verified that any two vertices in $F_n$, 
or $\overline{F_n}$, are at distance at most 2. Hence, $\chi(F_n^2)+
\chi(\overline{F_n}^2)=n+n = 2n$ and $\chi(F_n^2)\chi(\overline{F_n}^2)=n^2$.

\bigskip

%

\end{document}